
\documentclass[letterpaper, 10 pt, conference]{ieeeconf}  

\IEEEoverridecommandlockouts                              
\overrideIEEEmargins

\usepackage{latexsym,amssymb,amsmath, amsbsy,amsopn, amstext}
\usepackage{graphicx,epsfig,tikz,pgf,hyperref,cancel,color,float,ifpdf}
\usepackage{extarrows}

 \usepackage{enumitem}
\newcommand{\ud}{\mathrm{d}}
\def\e{\epsilon}
\def\l{\lambda}
\def\d{\delta}

\def\a{\alpha}
\def\b{\beta}
\def\m{\mu}

\def\S{\Sigma}
\def\b{\beta}

\def\R{\mathbb{R}}
\def\Z{\mathbb{Z}}

\def\ra{\rightarrow}
\def\lra{\longrightarrow}
\def\beq{\begin{equation}}
\def\eeq{\end{equation}}

\newtheorem{dfn}{\bfseries Definition}
\newtheorem{prp}{\bfseries Proposition}
\newtheorem{ass}{\it Assumption}
\newtheorem{thm}{\bfseries Theorem}

\newtheorem{cor}{\bfseries Corollary}

\newtheorem{rem}{\bfseries Remark}

\title{\bf
Passivity Degradation In Discrete Control Implementations: An Approximate Bisimulation Approach
}


\author{Xiangru Xu$^{1}$, Necmiye Ozay$^{1}$, Vijay Gupta$^{2}$
\thanks{$^{1}$Department of Electrical Engineering and Computer Science, University of Michigan, Ann Arbor, MI, 48109. {\tt\small \{xuxiangr, necmiye\}@umich.edu}, $^{2}$Department of Electrical Engineering, University of Notre Dame, Notre Dame, IN, 46556.
{\tt\small vgupta2@nd.edu}}
\thanks{The work of X. Xu was supported by NSF grant CNS-1239037. The work of N. Ozay was supported in part by NSF grant CNS-1446298.  This is an extended
version of the paper \cite{xupasscdc15} to appear in the Proceedings of the 54th IEEE Conference on Decision and Control (CDC), Osaka, Japan, December 15-18, 2015.}
}

\begin{document}

\maketitle
\thispagestyle{empty}
\pagestyle{empty}

\begin{abstract}

\color{black}{In this paper, we present some preliminary results for compositional analysis of heterogeneous systems containing both discrete state  models and continuous systems using consistent notions of dissipativity and passivity. We study the following problem:
given a physical plant model and a continuous feedback controller designed using traditional control techniques, how is the closed-loop passivity affected when the continuous controller is replaced by a discrete (i.e., symbolic) implementation within this framework? Specifically, we give quantitative results on performance degradation when the discrete control implementation is approximately bisimilar
to the continuous controller, and based on them, we provide conditions that guarantee the boundedness property of the closed-loop system.}


\end{abstract}

\section{Introduction}\label{sec:introduction}


Consider a cyber-physical system in which certain elements (e.g. the plant to be controlled) are physical, while other elements (e.g. controllers) are implemented in software. How do we systematically analyze and design such heterogeneous systems? In spite of the many important steps that have been taken to answer this question, more work remains to be done. We present an approach that seeks to extend the powerful tools of passivity based analysis and design to such heterogeneous systems. Specifically, the question we consider is - suppose a continuous controller has been designed to ensure specified passivity indices for the closed-loop system, {\color{black}what guarantees on passivity can be preserved if the controller is implemented in software?}
Our answer quantitatively shows how much passivity can be inherited from the original design that was done in the continuous domain when the symbolic control implementation is approximately bisimilar to the continuous controller.

Passivity, and its generalization dissipativity, are traditional tools in control theory. They have seen a recent resurgence for design of large scale systems since they offer the crucial property of compositionality - two passive systems in feedback configuration remain passive \cite{Khalil2000}. This property is not satisfied, e.g., by stability. We refer the reader to texts such as~\cite{Khalil2000,BrogDissBook07,JCWillems72} for a background in passivity. We note here that, perhaps inspired by cyber-physical systems, many important recent results have considered effects such as quantization~\cite{ZhuQuanSwitchHSCC12}, delays~\cite{chopra08,k08,matiakis06,Yu2013}
and packet drops~\cite{wang14} induced by communication networks and discretization of time \cite{LailaECC02,stramigioli2005sampled,OishiCDC10} on passivity. Each of these effects require an extension of the original definition of passivity and/or extra assumptions to be imposed on the set-up; for instance, \cite{OishiCDC10} requires the gain from the input to the derivative of the output to be bounded in a suitable sense. Under similar assumptions, we show that it is possible to reason about passivity of discrete-state systems and their interconnections with continuous plants. Our work is also related (yet, complementary) to the line of work on passivity of switched and hybrid systems~\cite{bbb08,pjs98,zbs01,zh08}, the input/output gain notions for finite-state systems \cite{tarraf2014input}, and analysis of control implementations \cite{Ny2010robustness,Anta10}.

Recently proposed symbolic models for dynamical systems provide a unified framework for studying the interactions of software and physical phenomena (see for instance, \cite{PauloBook09, PolaAuto08}, and the references therein). Such symbolic models are also used in control software synthesis from high-level specifications \cite{pola2012integrated, ZamaniTAC12, liu2014abstraction}. The basic workflow in these approaches is (i) to compute an approximate symbolic model of the plant based on (bi)simulation relations, (ii) to synthesize a discrete controller for the symbolic plant model, (iii) to refine the discrete controller and compose it with the original plant in feedback. By construction these discrete controllers can be implemented in software and guarantee that the closed-loop system satisfies the desired high-level {\color{black} specifications}. However, a controller designed using these approaches essentially includes an internal model of the plant it is designed for~\cite{PauloBook09}. Therefore, the complexity of the controller grows (exponentially) with the dimension of the state-space of the plant, which remains a major limitation for large scale applicability of these techniques for complex systems.

For specifications like stability or passivity, it is often possible to achieve desired performance with  classical controllers such as PID, lead-lag compensators, or controllers with simple state space representations. If these controllers are then implemented as software modules, it is of interest to ask what is the resulting effect on such specifications. This paper addresses that question. The basic workflow that we consider is (i) to synthesize a continuous controller for the continuous plant model to satisfy specified passivity indices in closed loop, (ii) to compute an approximate symbolic model of the controller based on bisimulation relations, and (iii) to compose the discrete controller with the original plant in feedback. Our results provide a relation between passivity indices of original continuous feedback loop, the bisimulation parameters, and the passivity indices of the new setup with the discrete controller and continuous plant. These results can then be used to guide the software implementation of the controller so that desired performance (in terms of passivity) can be guaranteed in the new setup. We note that the controllers we consider are arbitrary (apart from constraints such as stability) and are designed in continuous space (where the set of available tools is much richer). 


\emph{Notation:}
$\Z,\Z_+,\Z_0,\R,\R^+,\R_0^+$ denote the set of integer, positive integer, non-negative integer, real, positive real and non-negative real numbers, respectively. $\R^n$ denotes the space of $n$-dimensional real vectors.
$\|x\|$ and $\|x\|_2$ denote the $\ell_\infty$ and $\ell_2$ norm of a vector $x\in \R^n$, respectively. {\color{black} For continuous (discrete) time signal $\bf{x}:\R_0^+\rightarrow\R^n$ ($\bf{x}:\Z_0\rightarrow\R^n$), $x(t)$ ($x[k]$) denotes its value at time $t$ (time step $k$).}
For any $A\subseteq \R^n$ and $\mu>0$, $[A]_\m:=\{a\in A|a_i=k_i\m,k_i\in \Z,i=1,2,...,n\}$.
A relation $R\subset A\times B$ is identified with the map $R:A\ra 2^B$, which is defined by $b\in R(a)$ if and only if $(a,b)\in R$. For a set $S$, the set $R(S)$ is defined as $R(S)=\{b\in B: \exists a\in S, (a,b)\in R\}$. Given a relation $R\subset A\times B$, $R^{-1}$ denotes the inverse relation of $R$, i.e., $R^{-1}:=\{(b,a)\in B\times A: (a,b)\in R\}$.

\section{Preliminaries}\label{sec:preliminary}


\subsection{Control Systems and Transition Systems}

A continuous-time control system is a tuple $\S=(X,U,Y,f,h)$ where $X\subseteq \R^n$ is a set of states, $U\subseteq\R^m$ is a set of inputs, $Y\subseteq \R^m$ is a set of outputs, $f:X\times U\ra \R^n$  and $h:X \times U\ra \R^m$ are both continuous maps. The state, input and output of $\S$ at time $t\in\R_0^+$ is denoted by $x(t)$, $u(t)$, $y(t)$, respectively, and their evolution is governed by:
\begin{equation}\label{ctd}
\begin{aligned}
\dot{x}(t) &=f(x(t),u(t)),\\
y(t) & =h(x(t),u(t)), \;\; \forall t \in \R_0^+.
\end{aligned}
\end{equation}
We assume that
$f$ satisfies the standard conditions such that, given any sufficiently regular control input signal ${\bf u}:[0,T]\rightarrow U$ with $T\ge 0$ and any initial condition $x_0\in X$, there exists a unique state (and output) trajectory $\bf{x}$ (and $\bf{y}$) defined on $[0,T]$ satisfying $x(0)=x_0$ and Eq.~\eqref{ctd}.
Denote by $\textbf{x}(\tau,x_0,\textbf{u})$  the state reached at time $\tau$ under the input $\textbf{u}$ from the initial state $x_0$ of $\S$.
The continuous-time system $\S$ is called \emph{incrementally input-to-state stable} ($\d$-ISS) ~\cite{angeli2002lyapunov} if it is forward complete and there exist functions $\b_1\in \mathcal{KL}$ and $\b_2\in \mathcal{K}_{\infty}$~\cite{Khalil2000} such that for any $t\in\R_0^+$, any initial state $x_1,x_2\in \R^n$ and any input $\textbf{u},\textbf{v}$, the following condition is satisfied:
\begin{equation}\label{eqniss}
\|\textbf{x}(t,x_1,\textbf{u})-\textbf{x}(t,x_2,\textbf{v})\|\leq \b_1(\|x_1-x_2\|,t)+\b_2(\|\textbf{u}-\textbf{v}\|_{\infty}).
\end{equation}

A discrete-time control system is a tuple $\S_d=(X,U,Y,f_d,h_d)$ where
$X\subseteq \R^n$ is a set of states, $U\subseteq\R^m$ is a set of inputs, $Y\subseteq \R^m$ is a set of outputs, $f_d:X\times U\ra X$ and $h_d:X \times U\ra \R^m$ are both continuous maps.
The state, input and output of $\S_d$ at time step $k\in \Z_0$ is denoted by $x[k]$, $u[k]$, $y[k]$, respectively, and their evolution is governed by:
\begin{equation}\label{dtd}
\begin{aligned}
x[k+1] &=f_d(x[k],u[k]),\\
y[k] & =h_d(x[k],u[k]), \;\; \forall k \in \Z_0.
\end{aligned}
\end{equation}
The state and output trajectories of the system $\S$ are discrete-time signals satisfying Eq.~\eqref{dtd}.




\begin{dfn}
A transition system is a quintuple $T=(Q,L,O,\lra,H)$, where:
\begin{itemize}
\item $Q$ is a set of states;
\item $L$ is a set of labels;
\item $O$ is a set of outputs;
\item $\lra\subset Q\times L\times Q$ is the transition relation;
\item $H:Q\times L\lra O$ is the output function.
\end{itemize}
\end{dfn}

A transition system $T$ is said to be metric if $Q,L,O$ are equipped with metrics $\textbf{d}_s: Q\times Q\ra \R_0^+$, $\textbf{d}_l: L\times L\ra \R_0^+$ and $\textbf{d}_o: O\times O\ra \R_0^+$, respectively. Denote an element $(q,\ell,p)\in\lra$ by $q\xlongrightarrow{\ell}p$.

Bisimulation is a binary relation between two transition systems, which, roughly speaking, requires the two systems match each other's behavior \cite{PauloBook09}.
Girard et al. generalized the conventional bisimulation to  $\e$-approximate bisimulation, which allows the states of two transition systems to be within certain bounds \cite{GirardTAC07}. Furthermore, to capture the input and output behaviors of transition systems, we  consider the following $(\e,\mu)$-approximate bisimulation relation adopted from \cite{julius2009approximate}.


\begin{dfn}\label{dfnemubi}
Given two metric transition systems $T_1=(Q_1,L,O,\xlongrightarrow[1]{},H_1)$ and $T_2=(Q_2,L,O,\xlongrightarrow[2]{},H_2)$  where the state sets $Q_1,Q_2$ are equipped with the same metric $\textbf{d}_s$ and the input set $L$ is equipped with the metric $\textbf{d}_l$, for any $\e,\mu\in\R^+$, a relation $R\subset Q_1\times Q_2$ is said to be an $(\e,\mu)$-approximate bisimulation relation between $T_1$ and $T_2$, if for any $(q_1,q_2)\in R$:
\begin{enumerate}
\item[(i)] $\textbf{d}_s(q_1,q_2)\leq \e$;
\item[(ii)] $q_1\xlongrightarrow[1]{\ell_1}p_1$ implies the existence of $\ell_2\in L$ such that $\textbf{d}_l(\ell_1,\ell_2)\leq \mu$, $q_2\xlongrightarrow[2]{\ell_2}p_2$ and $(p_1,p_2)\in R$;
\item[(iii)] $q_2\xlongrightarrow[2]{\ell_2}p_2$ implies the existence of
   $\ell_1\in L$ such that $\textbf{d}_l(\ell_1,\ell_2)\leq \mu$, $q_1\xlongrightarrow[1]{\ell_1}p_1$ and $(p_1,p_2)\in R$.
\end{enumerate}
\end{dfn}

If there exists an $(\e,\mu)$-approximate bisimulation relation $R$ between $T_1$ and $T_2$ such that $R(Q_1)=Q_2$ and $R^{-1}(Q_2)=Q_1$, $T_1$ is called $(\e,\mu)$-bisimilar to $T_2$, which is denoted as $T_1\cong^{(\e,\mu)}T_2$.






Given a continuous-time control system $\Sigma=(X,U,Y,f,h)$ with $X=\R^n,U=\R^m,Y=\R^m$ and a sampling time $\tau$, we define a transition system associated with the time-discretization of $\S$ as
%
$T_\tau(\Sigma)=(X_1,U_1,Y_1,\xlongrightarrow[1]{},H_1)$, which consists of:
\begin{itemize}
  \item $X_1=\R^n$;
  \item $U_1= \R^m$;
  \item $Y_1=\R^m$;
  \item $p\xlongrightarrow[1]{u} q$ if $\textbf{x}(\tau,p,\textbf{u})=q$ where $\textbf{u}:[0,\tau)\rightarrow \{u\}$, $u\in U_1$ (i.e., $\textbf{u}$ is a constant signal with value $u$);
  \item $H_1=h$.
\end{itemize}

We interpret the trajectories of $T_\tau(\Sigma)$ in discrete-time, that is, it has an equivalent representation in terms of a discrete-time control system as in \eqref{dtd}.


By further quantizing the state and input spaces of $\Sigma$, we obtain an infinitely countable transition system $T_{\tau\mu\eta}(\Sigma)=(X_2,U_2,Y_2,\xlongrightarrow[2]{},H_2)$ for some $\tau,\mu,\eta>0$:
\begin{itemize}
  \item $X_2=[\R^n]_\eta$;
  \item $U_2=[\R^m]_\mu$;
   \item $Y_2=[\R^m]_\mu$;
  \item $p\xlongrightarrow[1]{\hat{u}} q$ if $||\textbf{x}(\tau,p,\textbf{u})-q||\leq \eta/2$ where $\textbf{u}:[0,\tau)\rightarrow \{\hat{u}\}$, $\hat{u}\in U_2$;

  \item $H_2(\hat{x},\hat{u})=\hat y$ where $\|\hat y-h(\hat{x},\hat{u})\|\leq\mu/2$, $\hat y\in Y_2$.
\end{itemize}

The trajectories of $T_{\tau\mu\eta}(\Sigma)$ are also interpreted in discrete-time. The state, input and output of $T_{\tau\mu\eta}(\Sigma)$ at time step $k\in \Z_0$ is denoted by $\hat x[k]$, $\hat u[k]$, $\hat y[k]$, respectively and their (possibly non-deterministic) evolution is governed by:
\begin{equation}\label{symd}
\begin{aligned}
\hat x[k+1] & \in \{p\in X_2 \mid  \hat x[k] \xlongrightarrow[2]{\hat u[k]}p \},\\
\hat y[k] & \in H_2(\hat x[k],\hat u[k]), \;\; \forall k \in \Z_0.
\end{aligned}
\end{equation}

The following proposition is a direct application of Theorem 5.1 in \cite{PolaAuto08}.
\begin{prp}\label{thmsi}
	Consider a continuous-time control system $\S$ and any desired precision $\e>0$. If $\S$ is $\d$-ISS satisfying (\ref{eqniss}) and parameters $\tau,\eta,\mu>0$
	satisfy the following inequality
	\begin{equation}\label{eqnbisi}
	\beta_1(\e,\tau)+\beta_2(\mu)+\eta/2\leq \e,
	\end{equation}
	then $T_\tau(\S)\cong^{(\e,\mu)} T_{\tau\mu\eta}(\Sigma)$.
\end{prp}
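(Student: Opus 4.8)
The plan is to exhibit one explicit relation that serves as an $(\e,\mu)$-approximate bisimulation and then to verify the two surjectivity conditions that upgrade it to bisimilarity. The two systems share the output alphabet but carry label sets $U_1=\R^m$ and $U_2=[\R^m]_\mu$; since $U_2\subset U_1$, I would regard both as transition systems over the common label set $L=\R^m$, with $\textbf{d}_l(u,v)=\|u-v\|$ and state metric $\textbf{d}_s=\|\cdot\|$. The candidate is
\[
R=\{(x_1,x_2)\in X_1\times X_2:\ \|x_1-x_2\|\le\e\},
\]
for which condition~(i) of Definition~\ref{dfnemubi} holds by construction. It then remains to check the matching conditions~(ii)--(iii) and that $R(X_1)=X_2$, $R^{-1}(X_2)=X_1$.

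For the forward condition~(ii), take $(x_1,x_2)\in R$ and a transition $x_1\xlongrightarrow[1]{u}p_1$, so $p_1=\textbf{x}(\tau,x_1,\textbf{u})$ for the constant signal $\textbf{u}$ of value $u$. I would choose $\hat u\in[\R^m]_\mu$ as a nearest quantization point of $u$, giving $\|u-\hat u\|\le\mu/2\le\mu$, which meets the label requirement. Applying the $\d$-ISS estimate~\eqref{eqniss} to the flows from $x_1$ and $x_2$ driven by $\textbf{u}$ and by the constant signal $\hat{\textbf{u}}$ of value $\hat u$, and invoking monotonicity of $\beta_1\in\mathcal{KL}$ in its first argument (with $\|x_1-x_2\|\le\e$) and of $\beta_2\in\mathcal{K}_\infty$ (with $\|u-\hat u\|\le\mu$), gives
\[
\|\textbf{x}(\tau,x_1,\textbf{u})-\textbf{x}(\tau,x_2,\hat{\textbf{u}})\|\le\beta_1(\e,\tau)+\beta_2(\mu).
\]
Picking $p_2\in[\R^n]_\eta$ to be a nearest grid point of $\textbf{x}(\tau,x_2,\hat{\textbf{u}})$ makes $x_2\xlongrightarrow[2]{\hat u}p_2$ admissible (its defining bound $\le\eta/2$ holds), and the triangle inequality then yields $\|p_1-p_2\|\le\beta_1(\e,\tau)+\beta_2(\mu)+\eta/2\le\e$ by the hypothesis~\eqref{eqnbisi}, so $(p_1,p_2)\in R$.

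The backward condition~(iii) is strictly easier: given $(x_1,x_2)\in R$ and $x_2\xlongrightarrow[2]{\hat u}p_2$, I would replay the exact label $u=\hat u\in\R^m=U_1$, so the label cost vanishes and, since $\beta_2(0)=0$, the same triangle-inequality argument leaves $\|p_1-p_2\|\le\beta_1(\e,\tau)+\eta/2\le\e$ for $p_1=\textbf{x}(\tau,x_1,\textbf{u})$. For bisimilarity, $R(X_1)=X_2$ holds because each $x_2\in X_2\subset X_1$ satisfies $(x_2,x_2)\in R$, and $R^{-1}(X_2)=X_1$ holds because every $x_1\in\R^n$ has a nearest grid point $x_2\in[\R^n]_\eta$ with $\|x_1-x_2\|\le\eta/2\le\e$, the last inequality again following from~\eqref{eqnbisi}.

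The main obstacle is the bookkeeping in step~(ii): one must simultaneously absorb three independent error sources---propagation of the initial state mismatch through the flow (via $\beta_1$), the input-quantization gap (via $\beta_2$), and the successor-state quantization (via $\eta/2$)---and arrange that replacing the true, smaller errors by the uniform worst-case bounds $\e$, $\mu$, and $\eta/2$ reproduces exactly the left-hand side of~\eqref{eqnbisi}. Monotonicity of the $\mathcal{KL}$ and $\mathcal{K}_\infty$ comparison functions is precisely what licenses this replacement, and this is the only point where the full strength of the $\d$-ISS property and of condition~\eqref{eqnbisi} is needed.
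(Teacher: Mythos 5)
Your proof is correct and is essentially the argument behind the paper's own justification: the paper does not prove Proposition~\ref{thmsi} directly but defers to Theorem~5.1 of the cited reference, whose proof uses exactly your relation $R=\{(x_1,x_2):\|x_1-x_2\|\le\e\}$ together with the $\d$-ISS estimate, nearest-point quantization of inputs and successor states, and the triangle inequality to land on the left-hand side of \eqref{eqnbisi}. Your handling of the label-set mismatch (embedding $U_2=[\R^m]_\mu$ into the common alphabet $L=\R^m$) and the verification of $R(X_1)=X_2$, $R^{-1}(X_2)=X_1$ are both sound, so no gaps remain.
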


We call the transition system $T_{\tau\mu\eta}(\S)$ with $T_\tau(\S)\cong^{(\e,\mu)} T_{\tau\mu\eta}(\S)$ a \emph{symbolic} model for $\S$. One nice property of this symbolic model is that its evolution can be chosen to be deterministic \cite{girard2013low}, which is appropriate for discrete software-based implementation.



%

\subsection{Dissipativity and Quasi-dissipativity}\label{secdis}

Some basic definitions about dissipativity and quasi-dissipativity are now given. 

\begin{dfn}\label{ddissCT}
A continuous-time control system $\S$ is called \emph{dissipative} with respect to a supply function $w(x,u,y)$ if there exists a continuous, positive semi-definite  storage function $V(x)$ such that the following (integral) dissipation inequality
\begin{equation}\label{con:dissipa}
V(x(t_2))-V(x(t_1))\leq \int_{t_1}^{t_2}w(x(s),u(s),y(s))\,\ud s
\end{equation}
is satisfied for all {\color{black}$t_1,t_2\in\R_0^+$} with $t_1<t_2$ and all inputs $u$.
\end{dfn}


A system is called \emph{input feed-forward output feedback passive} if it is dissipative with respect to the supply function $w(u,y)=u^Ty-\rho y^Ty-\nu u^Tu$ for some $\nu,\rho\in\R$ \cite{ZhuFeedbackACC14}.
Particularly, it is called \emph{passive} if $\nu=\rho=0$ and
\emph{very strictly passive} (VSP) if $\rho>0,\nu> 0$. The numbers $\nu,\rho$ are called \emph{passivity indices}, which reflect the excess or shortage of passivity of the system.
{\color{black}There are two points to emphasize. First, the indices $\rho$ and $\nu$ are not required to be non-negative in the input feed-forward output feedback passivity definition. Second, these indices are not unique for a given system; a system typically admits a family of indices.}


%


\begin{dfn}\label{ddissDT}
A discrete time control system $\S_d$ is called \emph{dissipative} with respect to the supply function $w(x,u,y)$ if there exists a continuous, positive semi-definite storage function $V(x)$ such that  the following dissipative inequality
\begin{equation}\label{dis:dissipa}
V(x[k+1])-V(x[k])\leq w(x[k],u[k],y[k])
\end{equation}
is satisfied for any $k\in\Z_0$ and any $u$.
\end{dfn}

Similar to the continuous case, we can define different dissipativity notions by choosing various forms of $w(x,u,y)$.


Unlike dissipative systems defined above, \emph{quasi-dissipative} (or almost-dissipative)  systems {\color{black}allow for} internal energy generation, and {\color{black}are defined as follows} \cite{DowSCL03,PolQuasiTAC04}.

\begin{dfn}
The continuous {\color{black}(resp. discrete)} time control system $\S$ {\color{black}(resp. $\S_d$)} is called quasi-dissipative with supply function $w(x,u,y)$ if there exists a constant $\alpha\geq 0$ such $\S$ {\color{black}(resp. $\S_d$)}  is dissipative with supply function  $w(x,u,y)+\alpha$.
\end{dfn}


It is clear that dissipative systems are quasi-dissipative with $\a=0$. If $w(x,u,y)=u^Ty-\nu u^Tu-\rho y^Ty$ with $\nu,\rho>0$, then the system is called
$(\nu,\rho,\alpha)$-\emph{very strictly quasi-passive} (VSQP). For example, a discrete-time VSQP system $\S_d$ satisfies the following dissipative inequality  for any $k\in\Z_0$:
\begin{align}
&V(x[k+1])-V(x[k])\nonumber\\
&\quad\leq u[k]^Ty[k]-\nu u[k]^Tu[k]-\rho y^T[k]y[k]+\a,\label{ieq:VSQP}
\end{align}
where $\nu,\rho,\a>0$ and $V(x)$ is {\color{black}positive semi-definite}. 


\begin{dfn}\label{dfndete}
System (\ref{dtd}) is called strongly finite-time detectable if there exist $N_0\in\Z_0,\kappa \in\R ^+$ such that for any $x[k],k\in\Z_0$ and for any input $u[i],k\leq i\leq k+N_0$,
\begin{equation}
\sum_{i=k}^{k+N_0}\|y[i]\|_2^2\geq \kappa\|x[k]\|_2^2.\label{strongdetec}
\end{equation}
\end{dfn}

Intuitively, strong finite-time detectability condition implies that large initial states result in large output signals. Next, we establish a connection between quasi-passivity and ultimate boundedness of discrete-time systems. This result will be later used in analysis of feedback interconnections. To the best of our knowledge, this result is new and can be of independent interest. A similar result for continuous-time systems is given in \cite{PolQuasiTAC04}.
\begin{thm}\label{thm:bound}
Given a strongly finite-time detectable discrete-time control system $\S_d$ that satisfies \eqref{ieq:VSQP} {\color{black}
with {\color{black}positive semi-definite}, radially unbounded $V$} and $\nu,\rho,\a>0$, if the input is bounded at all time and the initial state is also bounded, then the state of $\S_d$ is ultimately bounded, i.e., if $\sup_{\{i\geq k\}} \|u[i]\|_2\leq B_1$, $\|x[k]\|_2\leq B_2$ where $k$ is the initial time step and $B_1,B_2>0$, then there exists $D>0$ such that $\sup_{s\geq k}\|x[s]\|_2\leq D$.
\end{thm}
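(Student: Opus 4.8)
The plan is to treat the storage function $V$ as a Lyapunov-like certificate and to exploit the detectability window of length $N_0+1$ from \eqref{strongdetec}. The excess output passivity $\rho>0$ will be converted, through the detectability inequality, into a genuine decrease of $V$ across each window whenever the state is large, while the constant $\a$ and the bounded input contribute only a fixed additive term. Intermediate time steps inside a window are controlled by a crude single-step growth bound, and radial unboundedness of $V$ finally translates a uniform bound on $V$ into a uniform bound on $\|x[s]\|_2$.

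First I would establish a single-step growth bound. Starting from \eqref{ieq:VSQP} and completing the square in the cross term, $u[i]^Ty[i]-\rho\|y[i]\|_2^2\le \frac{1}{4\rho}\|u[i]\|_2^2$, so that, dropping the nonpositive $-\nu\|u[i]\|_2^2$ term and using $\|u[i]\|_2\le B_1$, one gets $V(x[i+1])-V(x[i])\le M$ with $M:=\frac{B_1^2}{4\rho}+\a$. Thus over any window $V$ can increase by at most $(N_0+1)M$.

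Next comes the key step: summing \eqref{ieq:VSQP} from $i=k$ to $i=k+N_0$ and telescoping the left side gives $V(x[k+N_0+1])-V(x[k])$. On the right I would split the cross term by Young's inequality as $u[i]^Ty[i]\le \frac{\rho}{2}\|y[i]\|_2^2+\frac{1}{2\rho}\|u[i]\|_2^2$ so that a term $-\frac{\rho}{2}\sum_{i}\|y[i]\|_2^2$ survives; applying \eqref{strongdetec} bounds this by $-\frac{\rho\kappa}{2}\|x[k]\|_2^2$. Collecting the remaining input and $\a$ contributions into one constant $C:=\max\{0,\frac{1}{2\rho}-\nu\}(N_0+1)B_1^2+(N_0+1)\a$ yields the windowed decrease $V(x[k+N_0+1])\le V(x[k])-\frac{\rho\kappa}{2}\|x[k]\|_2^2+C$. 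In particular, once $\|x[k]\|_2$ exceeds $r:=\sqrt{4C/(\rho\kappa)}$ the value of $V$ drops by at least $C$ over the window.

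Finally I would assemble these into ultimate boundedness. Sampling along $V_j:=V(x[k+j(N_0+1)])$, let $L:=\sup_{\|x\|_2\le r}V(x)$, finite by continuity of $V$ on a compact ball. Whenever $V_j>L$ the state at the window start must lie outside the ball of radius $r$, so the windowed decrease forces $V_{j+1}\le V_j-C$; combined with the growth bound $V_{j+1}\le V_j+(N_0+1)M$ in the complementary case, a short induction shows $V_j\le \bar V:=\max\{V(x[k]),\,L+C+(N_0+1)M\}$ for all $j$, which is finite since $\|x[k]\|_2\le B_2$. Propagating the single-step bound across the at most $N_0$ intermediate steps gives $V(x[s])\le \bar V+N_0M$ for every $s\ge k$, and radial unboundedness of $V$ makes the corresponding sublevel set bounded, producing the desired $D>0$ with $\sup_{s\ge k}\|x[s]\|_2\le D$. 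I expect the main obstacle to be the bookkeeping in this last paragraph: the detectability-based decrease controls $V$ only at window-aligned instants and only when the state is large, so care is needed to combine it with the single-step growth bound to obtain a bound uniform over all $s\ge k$ rather than merely along the sampled subsequence.
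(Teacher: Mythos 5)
Your proposal is correct and follows essentially the same route as the paper's proof: a single-step growth bound $V(x[i+1])-V(x[i])\le \frac{B_1^2}{4\rho}+\a$ via Young's inequality on the cross term, a windowed decrease obtained by combining the excess output passivity with the detectability inequality \eqref{strongdetec}, and an induction over detectability windows together with radial unboundedness of $V$ to convert the level-set bound into a state bound. The only differences are cosmetic — you keep $\frac{\rho}{2}\sum\|y\|_2^2$ rather than the paper's specific split $\lambda=\frac{1}{2\rho}+2\nu$, and you run the window induction directly instead of by contradiction — and, if anything, your indexing of the window as $N_0+1$ steps matches \eqref{strongdetec} more cleanly than the paper's.
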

\begin{proof} Given in Appendix \ref{sec:prf1}.
\end{proof}

\begin{cor}\label{cor:bound}
Given a strongly finite-time detectable discrete-time control system $\S_d$ that satisfies $V(x[k+1])-V(x[k])
\leq -\rho y^T[k]y[k]+\a$ {\color{black}for any $k\in\Z_0$ with {\color{black}positive semi-definite}, radially unbounded $V(x)$ and $\rho,\a>0$}, if the input is bounded at all time and the initial state is also bounded, then the state of $\S_d$ is ultimately bounded.

\end{cor}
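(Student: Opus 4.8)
The plan is to derive Corollary~\ref{cor:bound} as a direct specialization of Theorem~\ref{thm:bound}. The key observation is that the dissipation inequality assumed in the corollary,
\begin{equation*}
V(x[k+1])-V(x[k])\leq -\rho\, y^T[k]y[k]+\a,
\end{equation*}
is exactly the VSQP inequality \eqref{ieq:VSQP} with the cross term $u[k]^Ty[k]$ dropped and with $\nu u[k]^Tu[k]$ absent. First I would show that, under the corollary's standing hypothesis that the input is bounded (say $\sup_{i\geq k}\|u[i]\|_2\leq B_1$), the missing terms can be absorbed into a modified constant $\a'$ so that the hypotheses of Theorem~\ref{thm:bound} are met. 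Concretely, I would add and subtract the quadratic passivity terms to rewrite the right-hand side as $u[k]^Ty[k]-\nu u[k]^Tu[k]-\rho y^T[k]y[k]+\a'$, where the extra contributions $-u[k]^Ty[k]+\nu u[k]^Tu[k]$ must be bounded above by a constant.

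The main technical step is this bounding. The term $\nu u[k]^Tu[k]=\nu\|u[k]\|_2^2\leq \nu B_1^2$ is handled immediately by the input bound for any chosen $\nu>0$. The cross term $-u[k]^Ty[k]$ is the only one that involves the as-yet-unbounded output; I would control it with Young's inequality, writing $-u[k]^Ty[k]\leq \tfrac{1}{2\nu}\|u[k]\|_2^2+\tfrac{\nu}{2}\|y[k]\|_2^2$ for a free parameter $\nu>0$. This introduces a $\|y[k]\|_2^2$ term, so the trick is to \emph{split} the coefficient $\rho$: since $\rho>0$, I choose $\nu$ small enough that $\tfrac{\nu}{2}<\rho$, leaving a strictly positive residual coefficient $\rho':=\rho-\tfrac{\nu}{2}>0$ on the output term. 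The leftover input contributions $\tfrac{1}{2\nu}\|u[k]\|_2^2+\nu\|u[k]\|_2^2\leq(\tfrac{1}{2\nu}+\nu)B_1^2$ are then absorbed into the new constant $\a'=\a+(\tfrac{1}{2\nu}+\nu)B_1^2$.

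After this manipulation the inequality reads
\begin{equation*}
V(x[k+1])-V(x[k])\leq u[k]^Ty[k]-\nu\|u[k]\|_2^2-\rho'\|y[k]\|_2^2+\a',
\end{equation*}
with $\nu,\rho',\a'>0$, which is precisely \eqref{ieq:VSQP} with passivity indices $(\nu,\rho')$ and quasi-passivity constant $\a'$. Since $V$ is assumed positive semi-definite and radially unbounded, and $\S_d$ is strongly finite-time detectable and driven by a bounded input from a bounded initial state, all hypotheses of Theorem~\ref{thm:bound} now hold. Invoking that theorem directly yields the existence of $D>0$ with $\sup_{s\geq k}\|x[s]\|_2\leq D$, establishing ultimate boundedness.

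The step I expect to require the most care is the Young's-inequality splitting: the free parameter $\nu$ must be chosen small enough to keep $\rho'=\rho-\nu/2$ strictly positive, yet this same $\nu$ appears in the denominator $\tfrac{1}{2\nu}$ of the constant, so the trade-off must be managed so that $\a'$ remains finite. Since $\rho>0$ is fixed and $B_1$ is a finite constant, any sufficiently small $\nu\in(0,2\rho)$ works and $\a'$ stays finite, so no genuine obstruction arises — the corollary follows cleanly once the supply rate is recast into VSQP form.
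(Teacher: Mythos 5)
Your proposal is correct: the algebra checks out, and the reduction to Theorem~\ref{thm:bound} is valid. Specifically, starting from $V(x[k+1])-V(x[k])\leq -\rho\|y[k]\|_2^2+\alpha$ and using $u^Ty\geq -\tfrac{1}{2\nu}\|u\|_2^2-\tfrac{\nu}{2}\|y\|_2^2$ together with $\|u[k]\|_2\leq B_1$, one indeed obtains \eqref{ieq:VSQP} with indices $\bigl(\nu,\rho-\tfrac{\nu}{2}\bigr)$ and constant $\alpha'=\alpha+(\tfrac{1}{2\nu}+\nu)B_1^2$, all strictly positive for any $\nu\in(0,2\rho)$, so Theorem~\ref{thm:bound} applies as a black box. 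The paper states the corollary without proof, and the intended route is the opposite of yours: rather than reintroducing the cross and input terms so as to invoke the theorem's \emph{statement}, one observes that the corollary's hypothesis is already in the reduced form that the first step of the theorem's \emph{proof} produces (the Young-type bound with $\lambda=\tfrac{1}{2\rho}+2\nu$ is used there precisely to eliminate $u^Ty$ and arrive at $V(x[k+1])-V(x[k])\leq c_1\|u\|_2^2-c_2\|y\|_2^2+\alpha$ with $c_2>0$), so the remainder of that argument — the definitions of $r$, $v$, $D$ and the induction over blocks of length $N_0$ — carries over verbatim with $c_1=0$ and $c_2=\rho$. Your version buys a cleaner modular statement-level reduction at the cost of an artificially inflated constant $\alpha'$ (and hence a larger ultimate bound $D$ depending on $B_1$ through $\alpha'$ as well); the paper's implicit route gives sharper constants but requires reopening the theorem's proof. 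Both are sound.
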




\subsection{Problem Setup}
In this subsection, we informally introduce the main problem considered in this paper.

Consider the setup in Figure~\ref{figSig2}, where a continuous-time control system $\S_1$, which corresponds to a physical plant, is connected in feedback with a discrete implementation $T_{\tau\m\eta}(\S_2)$, which is obtained from a designed continuous-time controller $\Sigma_{2}$ and satisfies $T_\tau(\S_2)\cong ^{(\e,\mu)}T_{\tau\m\eta}(\S_2)$ for some $\epsilon,\tau,\mu,\eta>0$.
Given the passivity indices of $\S_1$ and $\S_2$, our goal is to find bounds on the passivity indices of the closed-loop system $\S_c$. In this setup we assume that (i) the external reference inputs $w_i, i=1,2$ to the closed-loop system $\S_c$ are discrete-time {\color{black}signals (e.g.,  obtained via a digital sensor or user interface)}, and  (ii) all the discrete-time signals in the feedback loop are synchronized.  

\begin{figure}[!hbt]
\centering
  \includegraphics[width=.48\textwidth]{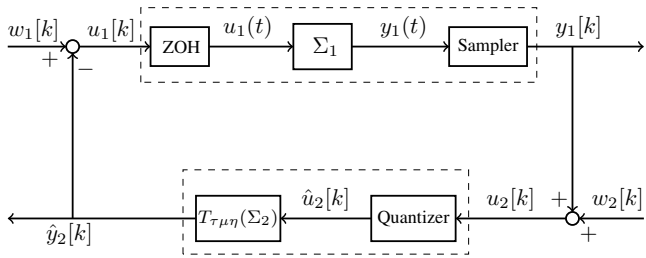}
\caption{The closed-loop system $\Sigma_c$ consists of $\S_1$ and $T_{\tau\mu\eta}(\S_2)$; ZOH is zero-order hold, sampler is the ideal sampler and quantizer is the uniform quantizer.  }\label{figSig2}
\end{figure}

The problem is solved in two steps. In Section \ref{sec:degradation}, we show that the passivity indices of $\Sigma_2$ {\color{black}induce certain} indices on $T_{\tau\m\eta}(\S_2)$. Then, in Section \ref{sec:feedback}, we show that the passivity properties of the closed-loop system with the controller $\Sigma_2$ are, in some sense, inherited by the system when $\Sigma_2$ is replaced by $T_{\tau\m\eta}(\S_2)$.

\section{{\color{black}Passivity Degradation of $\epsilon$-Approximate Bisimilar System}}\label{sec:degradation}

In this section, we study the relation of the passivity indices of a continuous-time control system $\Sigma$
to the passivity indices of its approximately bisimilar discrete version $T_{\tau\mu\eta}(\Sigma)$, which can be implemented in software. 

We suppose that the inputs to $\Sigma$ are piece-wise constant  signals in the set $\mathcal{U}_\tau:=\{{\bf u}:R_0^+\rightarrow U|u(t)=u((k-1)\tau),
\forall t\in [(k-1)\tau,k\tau),k\in \Z_+\}$ with $\tau$ the sampling time, and the following assumption from~\cite{OishiCDC10} holds:
\begin{ass}\label{assgain}
$\S$ satisfies the following gain inequality for any $t_1,t_2\in\R_0^+$ with $t_2>t_1$ and any admissible $u$:
\begin{equation}\label{eqngain}
\int_{t_1}^{t_2}\|\dot{y}(s)\|_2^2\,\ud s\leq\gamma^2 \int_{t_1}^{t_2}\|u(s)\|_2^2\,\ud s
\end{equation}
where $\gamma>0$ is a constant.
\end{ass}



The following result shows how passivity indices degrade under discretization and quantization.
\begin{thm}\label{thmpradis}
Consider a continuous time control system $\S=(X,U,Y,f,h)$ that is $\delta$-ISS and satisfies Assumption \ref{assgain}. For any $\e>0$,  suppose that parameters $\tau,\mu,\eta>0$ are chosen such that $T_\tau(\S)\cong ^{(\e,\mu)}T_{\tau\m\eta}(\S)$. Further, suppose that {\color{black}$\S$ is input feedforward output feedback passive with} passivity indices $(\nu,\rho)$
and storage function $V(x)$ that satisfies $|V(x_1)-V(x_2)|\leq L\|x_1-x_2\|^\theta$ for any $\|x_1-x_2\|\leq \epsilon$ and some constants $L,\theta>0$.
Then,
\begin{enumerate}[leftmargin=.4cm]
\item[(i)] $T_\tau(\S)$ satisfies the following (discrete time) passivity inequality for any $k\in\Z_0$:
\begin{align*}
& V(x[k+1])-V(x[k])\leq  \\
&\quad\quad  \tau u[k]^Ty[k]-\tau\nu'\|u[k]\|_2^2-\tau\rho'\|y[k]\|_2^2
\end{align*}
where
{\color{black}
\begin{align}\label{indextau1}
\begin{cases}
\nu'=&\nu-\gamma \tau(1+\lambda_1|\rho|)-\gamma^2\tau^2|\rho|,\\
\rho'=&\rho-\gamma\tau|\rho|/\lambda_1.
\end{cases}
\end{align}}
\item[(ii)] $T_{\tau\mu\eta}(\S)$ satisfies the  following (discrete time) passivity inequality  for any $k\in\Z_0$:
\begin{align*}
& V(\hat x[k+1])-V(\hat x[k])\leq\nonumber\\
& \quad\quad \tau\hat u[k]^T\hat y[k]-\tau\nu''\|\hat u[k]\|_2^2-\tau
\rho''\|\hat y[k]\|_2^2+\a
\end{align*}
where
{\color{black}
\begin{align}\label{indextau2}
\begin{cases}
\begin{array}{l}
\nu''=\nu-\tau\gamma-\frac{\lambda_2\sqrt{m}\mu}{4}
-|\rho|\tau\gamma(\tau\gamma+\sqrt{m}\mu\lambda_3+\lambda_4),\\
\rho''=\rho-|\rho|(\frac{\tau\gamma}{\lambda_4}+\sqrt{m}\mu\lambda_5),\\
\a=\tau\frac{\sqrt{m}\mu}{4\lambda_2}+\tau\mu|\rho|(\frac{\tau\gamma\sqrt{m}}{4\lambda_3}
+\frac{\sqrt{m}}{4\lambda_5}+\frac{m\mu}{4})+2L\epsilon^\theta.
\end{array}
\end{cases}
\end{align}
}
Here, $\l_1,\l_2,...,\l_5$ are arbitrary positive real numbers.
\end{enumerate}
\end{thm}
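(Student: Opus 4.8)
The plan is to reduce both claims to the continuous (integral) dissipation inequality of $\S$ applied over a single sampling interval $[k\tau,(k+1)\tau)$, on which the input is the constant $u[k]$ (resp.\ $\hat u[k]$). The discrepancy between the sampled output $y[k]=h(x[k],u[k])$ and the continuous output $y(s)$ on that interval is exactly the quantity that Assumption~\ref{assgain} is designed to control, and I expect that assumption to do all the real work. Throughout, since $\rho$ is not assumed nonnegative, every error estimate must be carried with $|\rho|$.

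For part (i), I would write $y(s)=y[k]+e(s)$ with $e(s)=\int_{k\tau}^{s}\dot y(r)\,\ud r$. Cauchy--Schwarz together with Assumption~\ref{assgain} over the interval, on which $\int_{k\tau}^{(k+1)\tau}\|\dot y\|_2^2\,\ud s\le\gamma^2\tau\|u[k]\|_2^2$, gives the pointwise bound $\|e(s)\|_2\le\gamma\tau\|u[k]\|_2$, hence $\int\|e\|_2\,\ud s\le\gamma\tau^2\|u[k]\|_2$ and $\int\|e\|_2^2\,\ud s\le\gamma^2\tau^3\|u[k]\|_2^2$. Substituting $y=y[k]+e$ into $u[k]^Ty-\rho\|y\|_2^2-\nu\|u[k]\|_2^2$ and integrating produces the target term $\tau u[k]^Ty[k]-\tau\rho\|y[k]\|_2^2-\tau\nu\|u[k]\|_2^2$ plus three error terms, $u[k]^T\!\int e$, $-2\rho\,y[k]^T\!\int e$, and $-\rho\int\|e\|_2^2$. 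Bounding the first and third directly yields the $\gamma\tau$ and $\gamma^2\tau^2|\rho|$ contributions to $\nu'$, while splitting the middle cross term by Young's inequality (weight $\lambda_1$) sends $\tfrac{\gamma\tau|\rho|}{\lambda_1}$ into $\rho'$ and $\gamma\tau\lambda_1|\rho|$ into $\nu'$; collecting the coefficients of $\|u[k]\|_2^2$ and $\|y[k]\|_2^2$ reproduces \eqref{indextau1}.

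For part (ii) I would repeat this expansion along the continuous trajectory emanating from $\hat x[k]$ under the constant input $\hat u[k]$, and then absorb the two extra errors introduced by the symbolic model. The output quantization enters through $y[k]=h(\hat x[k],\hat u[k])=\hat y[k]-q$ with $\|q\|_2\le\sqrt{m}\,\mu/2$, the factor $\sqrt m$ arising from passing between the $\ell_\infty$ metric used in the bisimulation and the $\ell_2$ norm in the supply rate; substituting $y[k]=\hat y[k]-q$ creates the cross terms $-\tau\hat u^Tq$, $2\rho\,\hat y^Tq$, $2\rho\,q^T\!\int e$ and the pure term $-\rho\tau\|q\|_2^2$, which I split via Young's inequality with weights $\lambda_2,\lambda_3,\lambda_5$; their $\|\hat u\|_2^2$/$\|\hat y\|_2^2$ parts degrade $\nu'',\rho''$ while their constant parts accumulate into $\alpha$, reproducing every $\mu$-term of \eqref{indextau2} (with $\lambda_4$ playing the role $\lambda_1$ played in part (i)). The state quantization enters through $V$: since $\|\mathbf{x}(\tau,\hat x[k],\hat{\bf u})-\hat x[k+1]\|\le\eta/2$, which is at most $\epsilon$ by the choice of parameters, the Hölder bound $|V(x_1)-V(x_2)|\le L\|x_1-x_2\|^\theta$ converts the replacement of the continuous successor by $\hat x[k+1]$ (together with the companion endpoint correction obtained through the bisimulation relation) into the additive constant $2L\epsilon^\theta$ in $\alpha$.

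The main obstacle is bookkeeping rather than any single hard estimate: one must route each error term to the correct slot --- a $\|u\|_2^2$ coefficient (worsening $\nu$), a $\|y\|_2^2$ coefficient (worsening $\rho$), or the constant $\alpha$ --- and tune the weights $\lambda_1,\dots,\lambda_5$ so that each split is adjustable, all while propagating $|\rho|$ because $\rho$ may be negative. The only genuinely structural step is the use of Assumption~\ref{assgain} to bound $e(s)$; without it the mismatch between $\int y(s)\,\ud s$ and $\tau y[k]$ could not be controlled and no discrete passivity inequality of this form would follow.
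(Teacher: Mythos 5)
Your proposal is correct and follows essentially the same route as the paper's proof: apply the integral dissipation inequality over one sampling interval, control the output deviation $y(s)-y[k]$ via Cauchy--Schwarz and Assumption~\ref{assgain}, split the cross terms with Young's inequality using the free weights $\lambda_1,\dots,\lambda_5$ (carrying $|\rho|$ since $\rho$ may be negative), and absorb the state-quantization mismatch through the H\"older condition on $V$ to produce the $2L\epsilon^\theta$ term in $\alpha$. The bookkeeping you describe reproduces \eqref{indextau1} and \eqref{indextau2} exactly as in Appendix~\ref{sec:prf2}.
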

\begin{proof}
Given in Appendix  \ref{sec:prf2}.
\end{proof}

A few remarks are in order.
{\color{black}
\begin{rem} {\color{black} The numbers $\l_1,\l_2,...,\l_5$ provide some flexibility in choosing the passivity indices.}
Note that if $\lambda_1=1$, then $\nu'$ and $\rho'$ coincide with the degraded passivity indices $\epsilon_d,\delta_d$ given in \cite{OishiCDC10}, respectively; if $\mu=0,\lambda_4=\lambda_1$, then $\nu'',\rho''$ coincide with $\nu',\rho'$, respectively. This is intuitive since a smaller value of $\mu$ indicates that $U_2,Y_2$ are quantized with more precision, and $U_2,Y_2$ degenerate to $\R^m$ when $\mu=0$. However, the presence of $\alpha>0$ reflects the further passivity degradation under state quantization.
\end{rem}}
\begin{rem}
If $\textbf{x}(t,0,0)=0$, $\|x_0\|$ and $\|\textbf{u}\|_{\infty}$ are both bounded, then, by \eqref{eqniss} we can see that $\|\textbf{x}(t,x_0,\textbf{u})\|\leq \b_1(\|x_0\|,t)+\b_2(\|\textbf{u}\|_{\infty})$. Therefore, $\textbf{x}(t,x_0,\textbf{u})$ is bounded by some $M>0$ for any $t>0$, which implies that  the state $x[k]$ of $T_\tau(\S)$ is bounded by $M$ and $\hat x[k]$ of $T_{\tau\m\eta}(\S)$ is bounded by $M+\epsilon$.
\end{rem}
{\color{black}
\begin{rem}
Equations~\eqref{indextau1} and \eqref{indextau2} indicate that to ensure $\nu',\rho'>0$ or $\nu'',\rho''>0$, it is necessary that $\Sigma$ is VSP with $\nu,\rho>0$.
On the other hand, if $\nu,\rho>0$, it is not hard to find that to ensure $\nu',\rho'>0$ or $\nu'',\rho''>0$, $\tau$ should satisfy
$$
\tau<\frac{2\nu}{\gamma(\sqrt{8\nu\rho+1}+1)}:=\tau_{max}.
$$

Note that to ensure $T_\tau(\S)\cong ^{(\e,\mu)}T_{\tau\m\eta}(\S)$, $\tau$ should be chosen large enough such that inequality \eqref{eqnbisi} holds. Therefore, given precision $\e$, a necessary condition for $\nu',\rho'>0$ or $\nu'',\rho''>0$ is that there exist $\mu,\eta>0$ such that 
$\beta_1(\e,\tau_{max})+\beta_2(\mu)+\eta/2\leq \e$ holds.
	
\end{rem}}


\section{{\color{black}Passivity Analysis of Closed-loop System}}\label{sec:feedback}

In this section, we present the main result of the paper.


\begin{thm}\label{thm:feedback}
Consider the setup in Figure~\ref{figSig2} in which the system $\Sigma_{1}$ corresponds to the continuous time plant, and the system $T_{\tau\m\eta}(\S_2)$ (with $T_\tau(\S_2)\cong ^{(\e,\mu)}T_{\tau\m\eta}(\S_2)$ for some $\epsilon,\tau,\mu,\eta>0$) has been obtained from a system $\Sigma_{2}$, which corresponds to a continuous time controller. Suppose that $\S_i,i=1,2,$ satisfies Assumption \ref{assgain} with gain $\gamma_i>0$.
Furthermore, suppose that $\S_i,i=1,2,$ {\color{black}is input feedforward output feedback passive} with passivity indices $(\nu_i,\rho_i)$  and positive semi-definite storage function $V_i$, which satisfies $|V_i(x_1)-V_i(x_2)|\leq L\|x_1-x_2\|^\theta$ for any $\|x_1-x_2\|\leq \epsilon$ and some constants $L,\theta>0$.
If  $\nu_c,\rho_c$ satisfy
\begin{align}\label{feedbacknuc}
\begin{cases}
\nu_c&\leq \min\{\bar\nu_1,\bar \nu_2\},\\
\rho_c&\leq \min\{\bar \rho_2-\frac{\nu_c\bar\nu_1}{\bar\nu_1-\nu_c}, \bar \rho_1-\frac{\nu_c\bar\nu_2}{\bar\nu_2-\nu_c}\},
\end{cases}
\end{align}
where
\begin{align}\label{feedbackindi1}
\begin{cases}
\bar\nu_1&=\nu_1-\gamma_1 \tau(1+\lambda_1|\rho_1|)-\gamma_1^2\tau^2|\rho_1|,\\
\bar\rho_1&=\rho_1-\gamma_1\tau|\rho_1|/\lambda_1,
\end{cases}
\end{align}
\begin{align}\label{feedbackindi3}
\begin{cases}
\bar\nu_2&=\hat \nu_2(1-\frac{1}{\ell_2}),\\
\bar\rho_2&=\hat \rho_2-\frac{1}{4\ell_1}, \\
\bar\a_2&=\hat \a+ \tau\frac{m\mu^2}{4}(\ell_1+\hat\nu_2(\ell_2+1)),
\end{cases}
\end{align}
with $\lambda_1,\ell_1,\ell_2$ arbitrarily positive numbers and $\hat \nu_2,\hat \rho_2,\hat\a $ obtained from \eqref{indextau2}  by {\color{black}substituting} $\nu,\rho,\gamma$ with $\nu_2,\rho_2,\gamma_2$, respectively, then the {\color{black}closed-loop system} $\S_{c}$ satisfies the following passivity inequality
\begin{align}
&\frac{1}{\tau}(V(x[k+1])-V(x[k]))\nonumber\\
&\quad\quad \leq w^Ty-\nu_c w^Tw-\rho_c y^Ty+\bar \a_2/\tau,\label{feedinequality}
\end{align}
where $V(x[k])=V_1(x_1[k])+V_2(\hat x_2[k])$ and $x=\left[\begin{array}{c}x_1\\ \hat x_2\end{array}\right]$,  $w=\left[\begin{array}{c}w_1\\ w_2
\end{array}\right]$,  $y=\left[\begin{array}{c}y_1\\ \hat y_2
\end{array}\right]$ are the state, input, output of $\S_c$, respectively.
\end{thm}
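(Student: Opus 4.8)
The plan is to proceed in three stages: first discretize each subsystem with Theorem~\ref{thmpradis}, then fold in the quantization error introduced by the feedback interconnection, and finally reduce the combined dissipation inequality to the target form \eqref{feedinequality} by a completion-of-squares argument.

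First I would apply Theorem~\ref{thmpradis} to each block separately. Since $\S_1$ is the continuous plant interfaced through the sampler and zero-order hold, part (i) of Theorem~\ref{thmpradis} (with $\nu,\rho,\gamma$ replaced by $\nu_1,\rho_1,\gamma_1$) shows that its time-discretization $T_\tau(\S_1)$ obeys a discrete passivity inequality with exactly the indices $\bar\nu_1,\bar\rho_1$ of \eqref{feedbackindi1}. Since the controller is realized as the symbolic model $T_{\tau\m\eta}(\S_2)$, part (ii) of the same theorem (with $\nu,\rho,\gamma$ replaced by $\nu_2,\rho_2,\gamma_2$) gives a quasi-passivity inequality for $T_{\tau\m\eta}(\S_2)$ with indices $\hat\nu_2,\hat\rho_2$ and constant $\hat\a$ as in \eqref{indextau2}; here the $2L\epsilon^\theta$ term inside $\hat\a$ already absorbs the mismatch between $V_2$ evaluated at the symbolic state $\hat x_2$ and at the true sampled state. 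Writing $V(x[k])=V_1(x_1[k])+V_2(\hat x_2[k])$ and adding the two inequalities produces a single inequality whose right-hand side is $\tau u_1^Ty_1+\tau\hat u_2^T\hat y_2$ minus the four quadratic penalty terms, plus $\hat\a$.

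Next I would substitute the interconnection equations read off from Figure~\ref{figSig2}. The controller output feeds the plant through the hold as $u_1=w_1-\hat y_2$, while the sampled plant output reaches the controller through the uniform quantizer, so that $\hat u_2=w_2+y_1+e$ with a quantization error bounded by $\|e\|_2\le\sqrt{m}\m/2$. Inserting these, the skew cross terms $-\hat y_2^Ty_1$ and $y_1^T\hat y_2$ cancel and the bilinear part collapses to $\tau w^Ty$ up to the residual $\tau e^T\hat y_2$. The two places where $e$ survives---this residual and the expansion $\|\hat u_2\|_2^2=\|w_2+y_1+e\|_2^2$---are handled by Young's inequality with the free parameters $\ell_1,\ell_2$: splitting $e^T\hat y_2$ degrades $\hat\rho_2$ to $\bar\rho_2=\hat\rho_2-\tfrac{1}{4\ell_1}$, splitting the $\|\hat u_2\|_2^2$ cross term degrades $\hat\nu_2$ to $\bar\nu_2=\hat\nu_2(1-\tfrac{1}{\ell_2})$, and the leftover $\m^2$-order energy is collected into $\bar\a_2$ as in \eqref{feedbackindi3}. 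After this step the right-hand side, divided by $\tau$, reads
\[
w^Ty-\bar\nu_1\|u_1\|_2^2-\bar\rho_1\|y_1\|_2^2-\bar\nu_2\|w_2+y_1\|_2^2-\bar\rho_2\|\hat y_2\|_2^2+\bar\a_2/\tau ,
\]
with $u_1=w_1-\hat y_2$.

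Finally I would establish \eqref{feedinequality} by showing that the four penalty terms dominate $\nu_c\|w\|_2^2+\rho_c\|y\|_2^2$. Because $\|w\|_2^2=\|w_1\|_2^2+\|w_2\|_2^2$ and $\|y\|_2^2=\|y_1\|_2^2+\|\hat y_2\|_2^2$, the required inequality decouples into a $(w_1,\hat y_2)$ block, $\bar\nu_1\|w_1-\hat y_2\|_2^2+\bar\rho_2\|\hat y_2\|_2^2\ge\nu_c\|w_1\|_2^2+\rho_c\|\hat y_2\|_2^2$, and a symmetric $(w_2,y_1)$ block, $\bar\nu_2\|w_2+y_1\|_2^2+\bar\rho_1\|y_1\|_2^2\ge\nu_c\|w_2\|_2^2+\rho_c\|y_1\|_2^2$. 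Each block is a scalar $2\times2$ quadratic form whose positive semidefiniteness is equivalent to nonnegativity of the leading coefficient together with a nonnegative discriminant; the former yields $\nu_c\le\bar\nu_1$ and $\nu_c\le\bar\nu_2$, while the latter, after the simplification $\tfrac{\bar\rho(\bar\nu-\nu_c)-\nu_c\bar\nu}{\bar\nu-\nu_c}=\bar\rho-\tfrac{\nu_c\bar\nu}{\bar\nu-\nu_c}$, yields precisely the two upper bounds on $\rho_c$ in \eqref{feedbacknuc}. I expect the main obstacle to be the bookkeeping in the middle stage: one must route the uniform-quantization error through both the bilinear supply rate and the $\|\hat u_2\|_2^2$ penalty and choose the Young splits so that the residual lands cleanly in $\bar\a_2$ while the degraded indices match \eqref{feedbackindi3}; the concluding completion-of-squares step, though the conceptual heart, is a routine reduction to two $2\times2$ positivity conditions.
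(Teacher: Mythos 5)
Your proposal is correct and follows essentially the same route as the paper's proof: apply Theorem~\ref{thmpradis}(i) to the plant and (ii) to the symbolic controller, absorb the input-quantization error via Young's inequality with parameters $\ell_1,\ell_2$ to obtain $\bar\nu_2,\bar\rho_2,\bar\a_2$, substitute the interconnection equations, and reduce the residual quadratic form to two $2\times2$ positive-semidefiniteness conditions equivalent to \eqref{feedbacknuc}. The only cosmetic difference is that you keep the penalties as completed squares and check the discriminant, while the paper expands them into explicit matrix quadratic forms; the two are the same argument.
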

\begin{proof} Given in Appendix \ref{sec:prf3}.
\end{proof}

Theorem \ref{thm:feedback} relates the passivity indices of the continuous time plant $\S_1$ and controller $\S_2$ to the passivity indices of discrete time system $\S_c$ where the symbolic controller $T_{\tau\mu\eta}(\S_2)$  is approximately bisimilar to $\S_2$. It can also be used as a guide to the design of the continuous controller, if the designer knows that only a software implementation will be used. Notice that the only requirement on the continuous controller is that it satisfies a certain passivity inequality.

A direct result of Theorem \ref{thm:feedback} and Theorem \ref{thm:bound} is that, if $\nu_2,\rho_2$ can be designed for $\Sigma_2$ such that \eqref{feedbacknuc} is satisfied with $\nu_c,\rho_c>0$,
and furthermore, if $\S_c$ is strongly finite-time detectable, then the states of $\S_c$ is ultimately bounded. 

Particularly, when the external input is zero (i.e., $w_1=w_2=0$), then \eqref{eqn:feed0} reduces to
\begin{align*}
&\quad \frac{1}{\tau}(V(x[k+1])-V(x[k]))\nonumber\\
&\leq  -(\bar \nu_2+\bar \rho_1)y_1^Ty_1-(\bar \nu_1+\bar \rho_2)\hat y_2^T\hat y_2+\bar\a_2\nonumber\\
& \leq -\min{\{\bar \nu_2+\bar \rho_1,\bar \nu_1+\bar \rho_2\}}y^Ty+\bar \a_2.
\end{align*}

If $\nu_2,\rho_2$ can be designed for $\Sigma_2$ such that $\bar \nu_2+\bar \rho_1,\bar \nu_1+\bar \rho_2>0$ and $\S_c$ is strongly finite-time detectable, then  $\S_c$ is ultimately bounded by Corollary \ref{cor:bound}.

{\color{black} \begin{rem} The symbolic model $T_{\tau\m\eta}(\S_2)$ we consider has countably infinite states. However, it is possible to consider compact subsets of state, input and output spaces and use the quantized version of these subsets as the state, input and output space of  $T_{\tau\m\eta}(\S_2)$. This will lead to a finite state model and local versions of the results presented above still hold. Similarly, it is straightforward to deal with local notions of passivity. \end{rem}}

\section{Example}\label{sec:example}

In this section, we illustrate the theoretical results in preceding sections by a simple cruise control example. 

The longitudinal dynamics of the vehicle is given by
\begin{align}
&\S_1:\quad \dot{x}_1=u_1-{\color{black}c_0x_1},\;y_1=x_1,\label{S1}
\end{align}
where $x_1$ is the speed of the vehicle, {\color{black}$c_0=0.01$ is the air-drag term}, $u_1$ is the scaled input and $y_1$ is the output.

The controller is given by
\begin{align}\label{o2}
\S_2:
\begin{cases}\dot{x}_2=\left[\begin{array}{cc}-1&-1\\1&-2
\end{array}\right]x_2+\left[\begin{array}{c}0\\ 1
\end{array}\right]u_2,\\
y_2=-\left[\begin{array}{cc}0.5& 0.5
\end{array}\right]x_2+2u_2.
\end{cases}
\end{align}


The passivity indices $\nu_1,\rho_1$ for $\S_1$ can be chosen as $\nu_1=0$, $\rho_1=0.01$ while  $\nu_2,\rho_2$ for $\S_2$ can be chosen as $\nu_2=0.31$, $\rho_2=0.42$. Let
$u_1=-y_2$, $u_2=y_1$. {\color{black}Then it is easy to find that $\nu_1+\rho_2>0,\rho_1+\nu_2>0$ and the feedback loop system is asymptotically stable.}

Note that the system \eqref{o2} is $\delta$-ISS. Therefore we can compute its $(\epsilon,\mu)$-approximate bisimilar system $T_{\tau\mu\eta}(\S_2)$ by choosing $\epsilon=0.9,\tau=0.3,\eta=0.1,\mu=0.1$.
Replace $\S_2$ with $T_{\tau\mu\eta}(\S_2)$ to constitute $\Sigma_c$
(as shown in Figure \ref{figSig2}), and assume the external input to be zero. Then by the formulas given in previous sections we can choose $\lambda_1=\lambda_2=\lambda_3=1,\lambda_4=1.5,\lambda_5=0.2,\ell_1=10,\ell_2=10$ such that $\bar\nu_1=-0.3039,\bar\rho_1=0.007$ and $\bar\nu_2=0.0106, \hat\rho_2=0.3411,\hat \a=0.6141$, which implies that $\bar\nu_1+\bar\rho_2>0$, $\bar\rho_1+\bar\nu_2>0$.
Simulation results in Figure \ref{figsimu} shows that $\S_c$ is ultimately bounded.
\begin{figure}[!hbt]
\begin{minipage}{.24\textwidth}
\includegraphics[width=4.4cm]{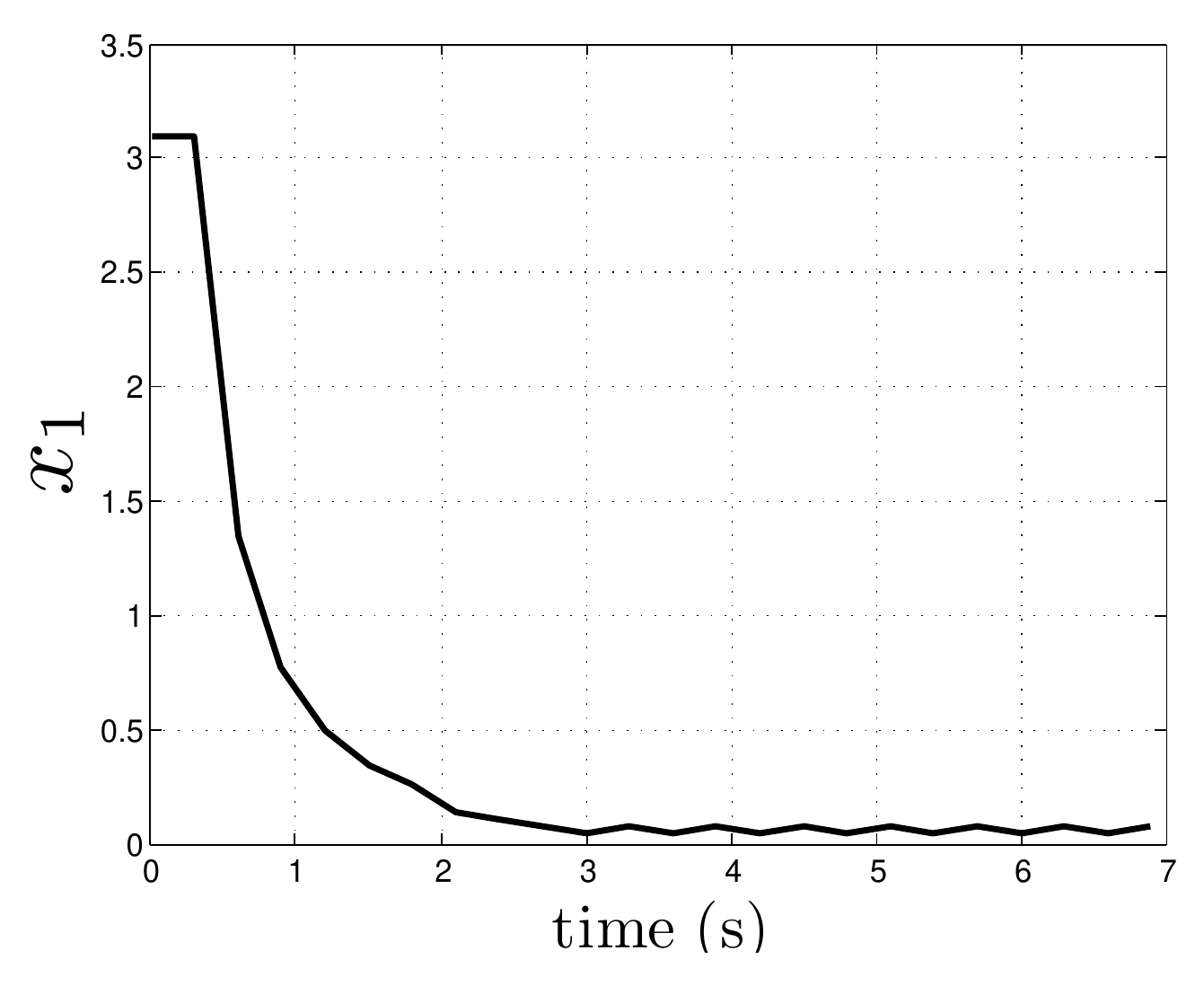}
\includegraphics[width=4.4cm]{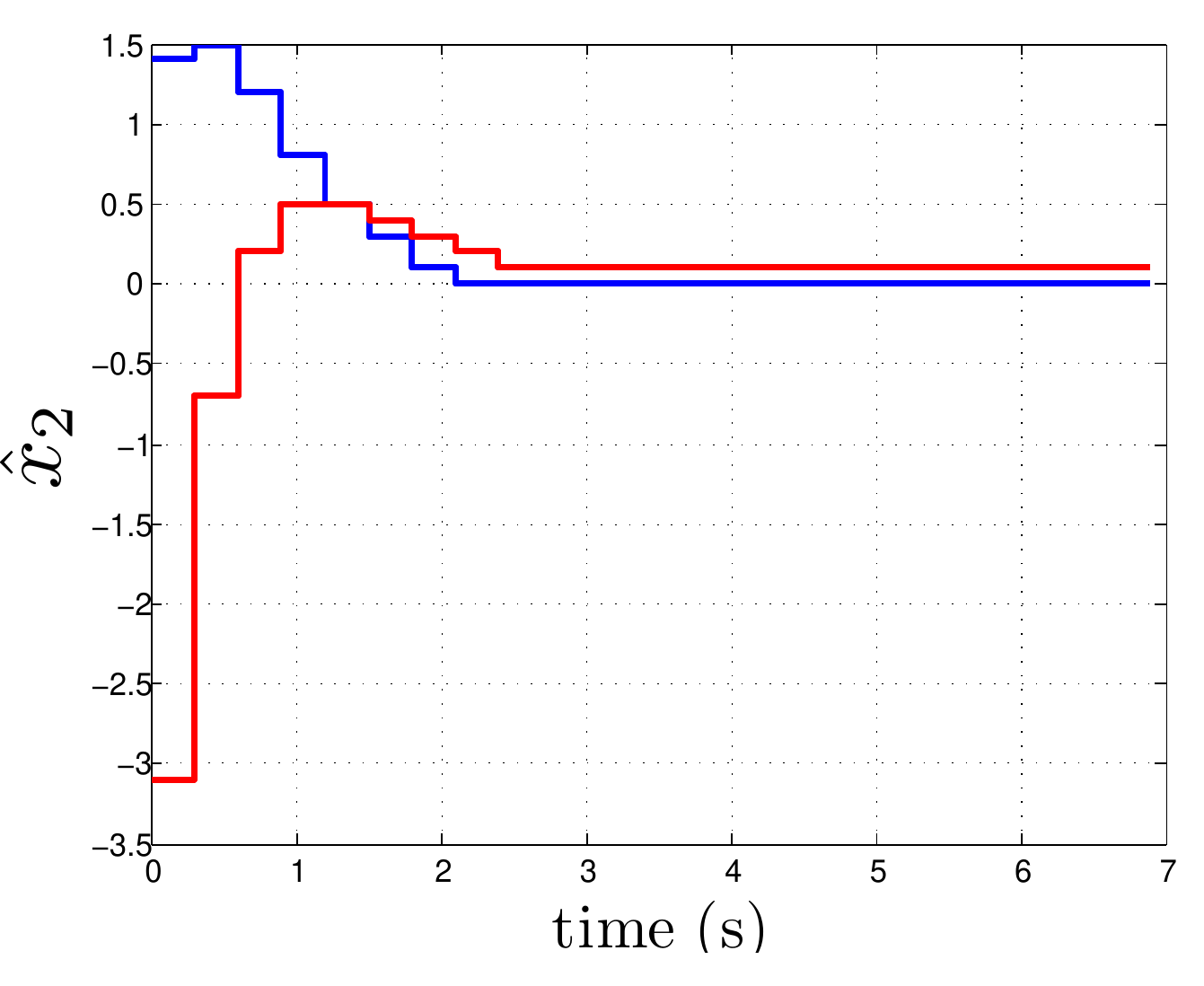}
    \end{minipage}
    \begin{minipage}{0.24\textwidth}
\includegraphics[width=4.4cm]{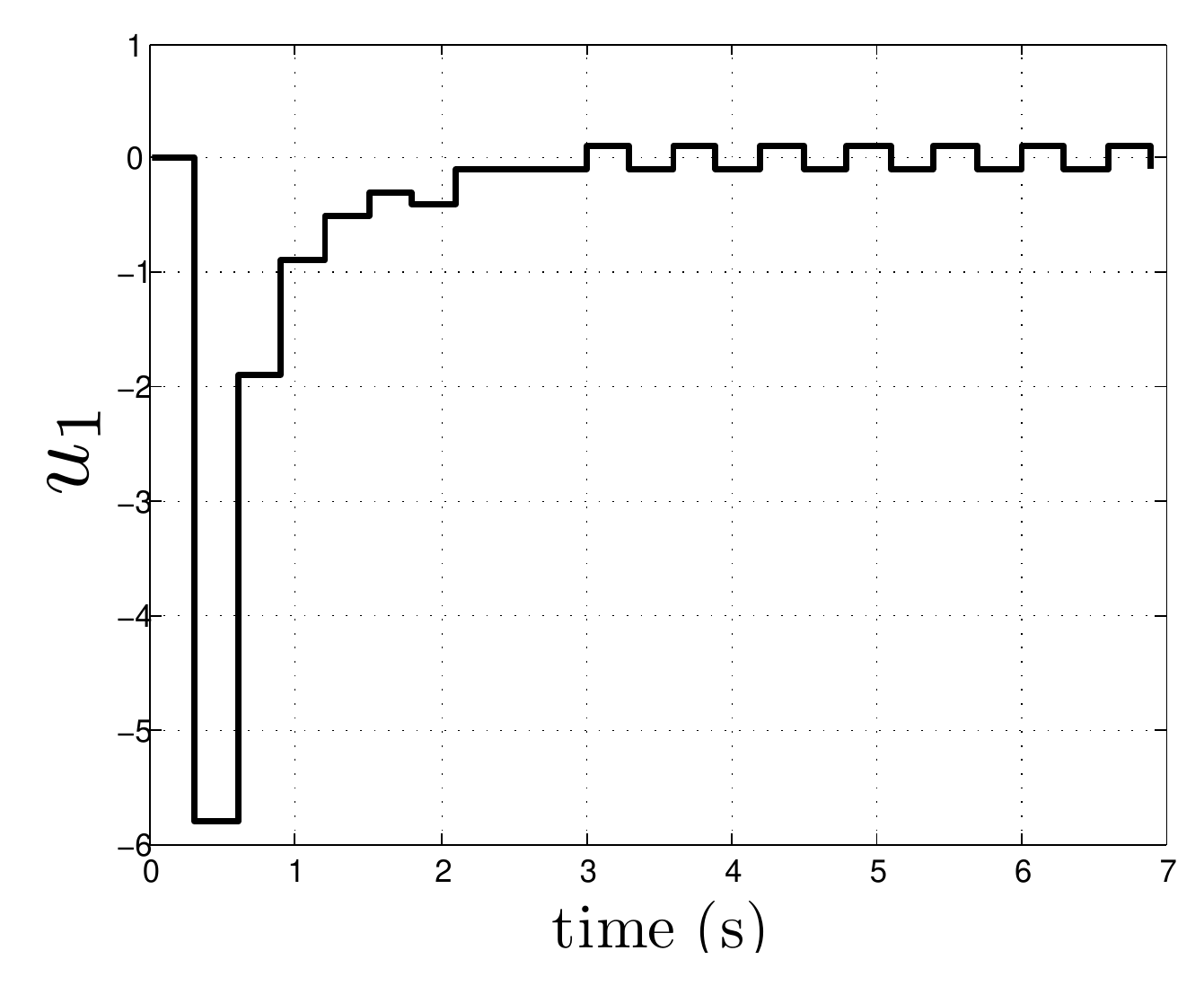}
\includegraphics[width=4.4cm]{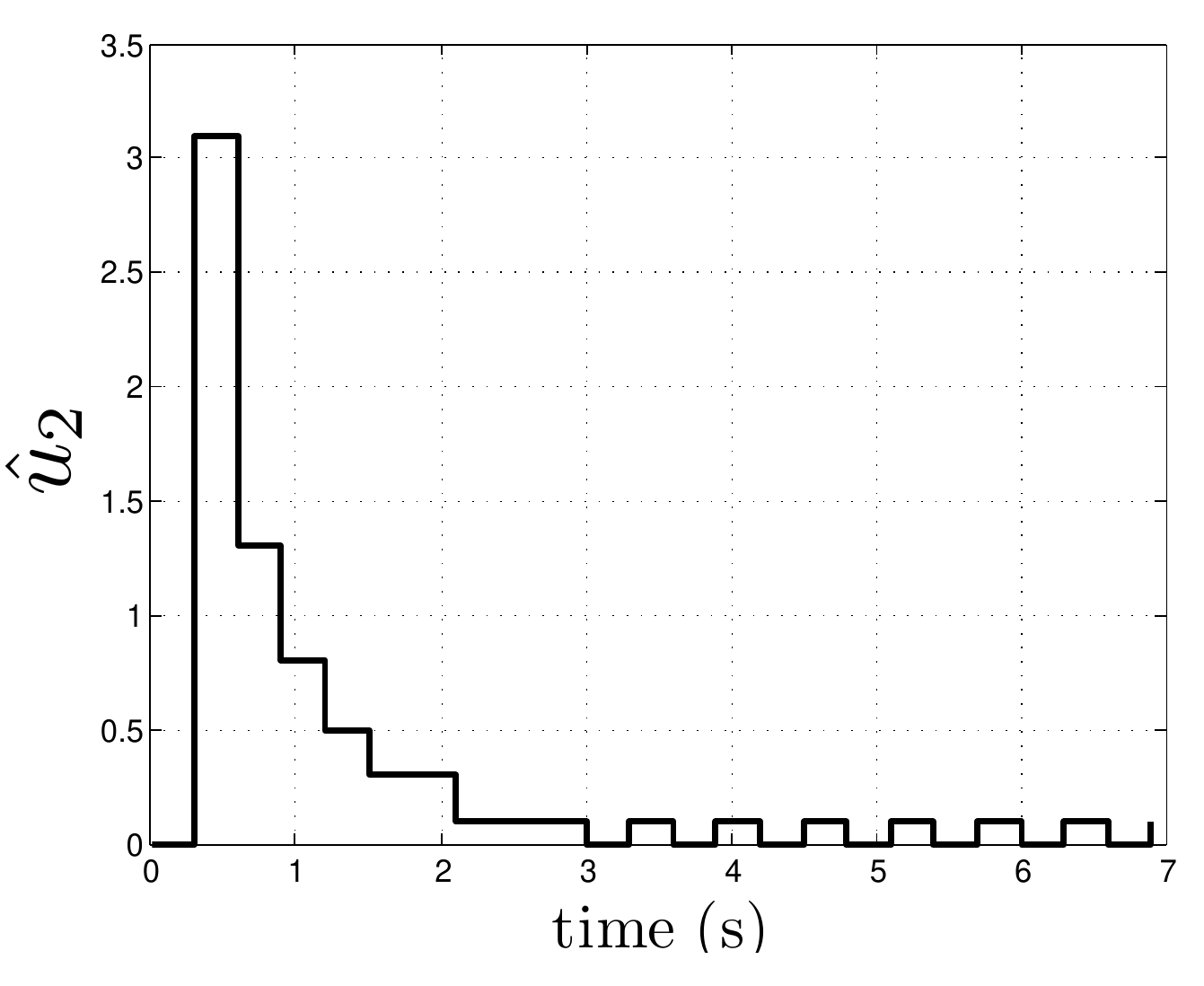}
    \end{minipage}
    \caption{Simulation result with initial state $x_1(0)=3.1,u_1(0)=0,\hat x_2(0)=(1.4,-3)',\hat u_2(0)=0$. (top left) $x_1(t)$; (top right) $u_1(t)$; (down left) blue is $\hat x_2^1[k]$  and red is $\hat x_2^2[k]$ ; (down right) $\hat u_2[k]$.}\label{figsimu}
    \end{figure}

\section{Conclusions}\label{sec:conclusions}

 We considered the problem of analyzing the passivity of a closed loop system when the controller is designed in continuous space, but implemented as a symbolic model. Our main result shows that if the implemented symbolic controller is obtained via approximate bisimulation, several passivity properties carry over despite such replacement. More precisely, we relate the passivity indices of the original system with both the plant and the controller as continuous systems, the bisimulation parameters, and the quasi-passivity indices of the new system with the controller implemented in discrete space. 

Combining ideas from symbolic control and passivity provides a general framework for analyzing and designing heterogeneous systems.
We are currently working on extensions of this framework to more general dissipation inequalities. A potentially limiting assumption in the paper is that every discrete time signal shares the same clock. Such synchronization runs counter to the promise of compositionality that passivity brings in large scale systems. We would like to relax this assumption in our future work. 

%


\appendices


\section{Proof of Theorem \ref{thm:bound}}\label{sec:prf1}

Let $\l=\frac{1}{2\rho}+2\nu$. Because
$$
u^T[k]y[k]\leq \frac{\l}{2}u^T[k]u[k]+\frac{1}{2\l}y^T[k]y[k],
$$
from equation \eqref{ieq:VSQP} we have
\begin{align}
&V(x[k+1])-V(x[k])\nonumber\\
\leq &\frac{\l}{2}u^T[k]u[k]+\frac{1}{2\l}y^T[k]y[k]-\nu u^T[k]u[k]-\rho y^T[k]y[k]+\a\nonumber\\
=&  (\frac{\l}{2}-\nu)u^T[k]u[k]+(\frac{1}{2\l}-\rho)y^T[k]y[k]+ \a\nonumber\\
\leq& \frac{1}{4\rho}B_1^2+\a.\label{quasi}
\end{align}

Therefore, for any $N_0\in\Z_+,k\in\Z_0$,
\begin{align}
& V(x[k+N_0])-V(x[k])\nonumber\\
\leq&  \sum_{i=k}^{k+N_0-1} ((\frac{\l}{2}-\nu)u^T[i]u[i]+(\frac{1}{2\l}-\rho)y^T[i]y[i]+\a)\nonumber\\
\leq & \sum_{i=k}^{k+N_0-1}
((\frac{\l}{2}-\nu)\|u[i]\|_2^2+\a) -(\rho-\frac{1}{2\l})\kappa\|x[k]\|_2^2\nonumber\\
\leq& N_0(\frac{B_1^2}{4\rho}+\a) -\frac{4\rho^2\nu\kappa}{4\rho\nu+1}\|x[k]\|_2^2\label{quasidetec}
\end{align}
where the second inequality is from \eqref{strongdetec}.

Define
\begin{align*}
r&:=\max\{B_2,\sqrt{\frac{N_0(\frac{B_1^2}{4\rho}+\a)(4\rho\nu+1) }{4\rho^2\nu\kappa}}\},\\
v&:=\max_{\|z\|_2\leq r}V(z),\\
D&:=\sup\{\|x\|_2:V(x)\leq v+N_0(\frac{B_1^2}{4\rho}+\a) \}.
\end{align*}

Clearly, $B_2\leq r\leq D$ and $\|x[k]\|_2\leq B_2\leq D <\infty$, {\color{black} where boundedness of $D$ follows from the fact that $V$ is radially unbounded}.

For $s\in\{k+1,...,k+N_0\}$, \eqref{quasi} implies that
\begin{align*}
V(x[s])&\leq V(x[k])+(s-k)(\frac{B_1^2}{4\rho}+\a)\\
&\leq v+N_0(\frac{B_1^2}{4\rho}+\a).\\
\end{align*}

Therefore, $\|x[s]\|_2\leq D$  for $s\in\{k+1,...,k+N_0\}$.

Now consider $s\in\{k+N_0+1,...,k+2N_0\}$. If there exists $s^*\in\{k+N_0+1,...,k+2N_0\}$ such that
\begin{equation}\label{bound}
V(x[s^*])>v+N_0(\frac{B_1^2}{4\rho}+\a),
\end{equation}
then by (\ref{quasi}) we have
$$
V(x[s^*-N_0])\geq V(x[s^*])-N_0(\frac{B_1^2}{4\rho}+\a) >v.
$$

It implies that $\|x[s^*-N_0]\|_2> r$. By (\ref{quasidetec}) we have
\begin{align*}
V(x[s^*])&\leq  V(x[s^*-N_0])+N_0(\frac{B_1^2}{4\rho}+\a) -\frac{4\rho^2\nu\kappa}{4\rho\nu+1}r^2\\
&\leq  V(x[s^*-N_0])\\
&\leq v+N_0(\frac{B_1^2}{4\rho}+\a).
\end{align*}

This contradicts with (\ref{bound}). Therefore, $\|x[s]\|_2\leq D$  for $s\in\{k+N_0+1,...,k+2N_0\}$.
By induction we conclude that $\sup_{s\geq k}\|x[s]\|_2\leq D$.
\hfill$\Box$

\section{Proof of Theorem \ref{thmpradis}} \label{sec:prf2}
Taking $t_1=k\tau,t_2=(k+1)\tau$ for some $k\in\Z_0$ in \eqref{con:dissipa}, we have:
\begin{align}\label{syspassin5}
&V(x((k+1)\tau))-V(x(k\tau))\leq\nonumber\\
&\int_{k\tau}^{(k+1)\tau}u(t)^Ty(t)-\nu u(t)^Tu(t)-\rho y(t)^Ty(t)\,\ud t.
\end{align}

(i)
For any $t\in[k\tau,(k+1)\tau)$,
\begin{align}
\|y(t)-y(k\tau)\|_2
&\leq \int_{s=k\tau}^{(k+1)\tau}\|\dot{y}(s)\|_2\;\ud s\nonumber\\
&\leq \sqrt{\tau}\sqrt{\int_{s=k\tau}^{(k+1)\tau}\|\dot{y}(s)\|^2_2\;\ud s}\nonumber\\
&\leq\sqrt{\tau}\sqrt{\gamma^2\int_{s=k\tau}^{(k+1)\tau}\|u(s)\|^2_2\;\ud s}\nonumber\\
&\leq \tau\gamma\|u[k]\|_2,\label{app2dis}
\end{align}


Because
\begin{align*}
&\left|\int_{k\tau}^{(k+1)\tau}u(t)^Ty(t)\;\ud t-\tau u[k]^Ty[k]\right|\\
\leq &\int_{k\tau}^{(k+1)\tau}\|u[k]\|_2\|y(t)-y[k]\|_2\;\ud t\\
\leq &\|u[k]\|_2\int_{k\tau}^{(k+1)\tau}\|y(t)-y(k\tau)\|_2\;\ud t\\
\leq &\tau^2\gamma\|u[k]\|_2^2,
\end{align*}

we have
\begin{align}
&\int_{k\tau}^{(k+1)\tau}u(t)^Ty(t)\;\ud t\leq\tau u[k]^Ty[k]+\tau^2\gamma\|u[k]\|_2^2. \label{D11}
\end{align}

It is also clear that
\begin{equation}\label{D12}
\int_{k\tau}^{(k+1)\tau}u(t)^Tu(t)\;\ud t=\tau u[k]^T u[k].
\end{equation}

Furthermore, because
\begin{align*}
& \left|\int_{k\tau}^{(k+1)\tau}y(t)^Ty(t)\;\ud t-\tau y[k]^T y[k]\right|\\
&= \int_{k\tau}^{(k+1)\tau}\|y(t)-y[k]\|_2^2+2\|y[k]\|_2\|y(t)-y[k]\|_2\;\ud t\\
&\leq \tau^3\gamma^2\|u[k]\|_2^2+2\tau^2\gamma\|y[k]\|_2\|u[k]\|_2\\
&\leq \tau^3\gamma^2\|u[k]\|_2^2+\tau^2\gamma(\frac{\|y[k]\|_2^2}{\lambda_1}+\lambda_1\|u[k]\|_2^2)
\end{align*}
where $\lambda_1$ is an arbitrary positive number, we have:
\begin{align}
& -\rho \int_{k\tau}^{(k+1)\tau}y(t)^Ty(t)\;\ud t\leq \tau^3\gamma^2|\rho|\|u[k]\|_2^2+\nonumber\\
&\tau(\frac{\tau\gamma|\rho|}{\lambda_1}-\rho)\|y[k]\|_2^2
+\lambda_1\tau^2\gamma|\rho|\|u[k]\|_2^2.\label{D13}
\end{align}

Combing (\ref{D11}), (\ref{D12}), (\ref{D13}) and noting that $V(x[k])=V(x(k\tau))$, the conclusion follows.

(ii) Similar to \eqref{app2dis}, for any $t\in[k\tau,(k+1)\tau)$, we have
$$
\|y(t)-y(k\tau)\|_2 \leq \tau\gamma\|\hat u[k]\|_2.
$$

Furthermore, because
\begin{align*}
\|y(k\tau)-\hat y[k]\|_2&=\|y[k]-\hat y[k]\|_2\\
&\leq \sqrt{m}\|y[k]-\hat y[k]\|\\
&\leq \sqrt{m}\mu/2,
\end{align*}
we have
\begin{align*}
\|y(t)-\hat y[k]\|_2&\leq \|y(t)-y(k\tau)\|_2+\|y(k\tau)-\hat y[k]\|_2\\
&\leq\tau\gamma\|\hat u[k]\|_2+\sqrt{m}\mu/2.
\end{align*}



Similar to the proof in (i), we have
\begin{align}
&\left|\int_{k\tau}^{(k+1)\tau}u(t)^Ty(t)\;\ud t-\tau\hat u[k]^T\hat y[k]\right|\nonumber\\
\leq &\int_{k\tau}^{(k+1)\tau}\|\hat u[k]\|_2\|y(t)-\hat y[k]\|_2\;\ud t\nonumber\\
\leq& \|\hat u[k]\|_2\int_{k\tau}^{(k+1)\tau}\tau\gamma\|\hat u[k]\|_2+\frac{\sqrt{m}\mu}{2}\;\ud t\nonumber\\
\leq& \tau^2\gamma\|\hat u[k]\|_2^2+\tau\frac{\sqrt{m}\mu}{4}(\frac{1}{\lambda_2}+
\lambda_2\|\hat u[k]\|^2_2),\nonumber
\end{align}
where $\lambda_2$ is an arbitrary positive number. It implies that
\begin{align}
&\int_{k\tau}^{(k+1)\tau}u(t)^Ty(t)\;\ud t\leq\tau\hat u[k]^T\hat y[k]\nonumber\\
&\quad+\tau(\tau\gamma+\frac{\lambda_2\sqrt{m}\mu}{4})\|\hat u[k]\|_2^2+\tau\frac{\sqrt{m}\mu}{4\lambda_2}.\label{D1}
\end{align}
%

It is clear that
\begin{equation}\label{D2}
\int_{k\tau}^{(k+1)\tau}u(t)^Tu(t)\;\ud t=\tau\hat u[k]^T\hat u[k].
\end{equation}

Furthermore,
\begin{align*}
& \left|\int_{k\tau}^{(k+1)\tau}y(t)^Ty(t)\;\ud t-\tau\hat y[k]^T\hat y[k]\right|\\
&\leq \int_{k\tau}^{(k+1)\tau}\|y(t)-\hat y[k]\|_2^2+2\|\hat y[k]\|_2\|y(t)-\hat y[k]\|_2\;\ud t\\
&\leq \tau(\tau\gamma\|\hat u[k]\|_2+\frac{\sqrt{m}\mu}{2})^2+2\tau\|\hat y[k]\|_2(\tau\gamma\|\hat u[k]\|_2+\frac{\sqrt{m}\mu}{2})\\
&\leq \tau^3\gamma^2\|\hat u[k]\|_2^2+\tau^2\sqrt{m}\mu\gamma
(\frac{1}{4\lambda_3}+\lambda_3\|\hat u[k]\|_2^2)+\tau \frac{m\mu^2}{4}\\
&+\tau^2\gamma
(\frac{1}{\lambda_4}\|\hat y[k]\|_2^2+\lambda_4\|\hat u[k]\|_2^2)+
\tau\sqrt{m}\mu(\frac{1}{4\lambda_5}+\lambda_5\|\hat y[k]\|_2^2)\\
&\leq \tau(\tau^2\gamma^2+\tau\sqrt{m}\mu\gamma\lambda_3+
\tau\gamma\lambda_4)\|\hat u[k]\|_2^2\\
&+\tau(\frac{\tau\gamma}{\lambda_4}+\sqrt{m}\mu\lambda_5)\|\hat y[k]\|_2^2+\tau(\frac{\tau\sqrt{m}\mu\gamma}{4\lambda_3}
+\frac{\sqrt{m}\mu}{4\lambda_5}+\frac{m\mu^2}{4})
\end{align*}
where $\lambda_3,\lambda_4,\lambda_5$ are arbitrary positive numbers. Then
\begin{align}
&-\rho\int_{k\tau}^{(k+1)\tau}y(t)^Ty(t)\;\ud t
\leq \tau^2|\rho|(\tau\gamma^2+\sqrt{m}\mu\gamma\lambda_3\nonumber\\
&\quad+\gamma\lambda_4)\|\hat u[k]\|_2^2+\tau(\frac{|\rho|\tau\gamma}{\lambda_4}+|\rho|\sqrt{m}\mu\lambda_5-\rho)
\|\hat y[k]\|_2^2\nonumber\\
&\quad+\tau\mu|\rho|(\frac{\tau\gamma\sqrt{m}}{4\lambda_3}
+\frac{\sqrt{m}}{4\lambda_5}+\frac{m\mu}{4}).\label{D3}
\end{align}

Finally, because
\begin{align*}
&|V(\hat x[k+1])-V(x((k+1)\tau))|\\
\leq & L\|\hat x[k+1]-x((k+1)\tau)\|^\theta
\leq  L\epsilon^\theta,\\
&|V(\hat x[k])-V(x(k\tau))|\\
\leq & L\|\hat x[k]-x(k\tau)\|^\theta
\leq  L\epsilon^\theta,
\end{align*}
 we have
\begin{align}\label{V1}
& V(\hat x[k+1])-V(\hat x[k])\nonumber \\
&\leq V(x((k+1)\tau))-V(x(k\tau))+2L\epsilon^\theta.
\end{align}

Combing (\ref{D1}), (\ref{D2}), (\ref{D3}) and \eqref{V1}, the conclusion follows immediately. \hfill$\Box$

\section{Proof of Theorem \ref{thm:feedback}} \label{sec:prf3}
Note that
\begin{align}
& V(x[k+1])-V(x[k])=(V_{1}(x_{1}[k+1])-V_{1}(x_{1}[k]))\nonumber\\
&\quad \quad +(V_{2}(\hat{x}_{2}[k+1])-V_{2}(\hat{x}_{2}[k])).\nonumber
\end{align}
We bound the two terms on the right hand side   individually.

\textbf{Bounding $V_{1}(x_{1}[k+1])-V_{1}(x_{1}[k])$:}
Consider the upper dashed block with input $u_1[k]$ and output $y_1[k]$. Noting that $u_1[k]=w_1[k]-\hat y_2[k]$, we have the following inequality by (i) of Theorem \ref{thmpradis}:
\begin{align}
& \frac{1}{\tau}(V_1(x_1[k+1])-V_1(x_1[k]))\leq (w_1[k]-\hat y_2[k])^Ty_1[k]\nonumber\\
&\quad-\bar \nu_1\|w_1[k]-\hat y_2[k]\|_2^2-\bar\rho_1\|y_1[k]\|_2^2\label{inequV2}
\end{align}
where $\bar\nu_1,\bar\rho_1$ are given by \eqref{feedbackindi1}.


\textbf{Bounding $V_{2}(\hat{x}_{2}[k+1])-V_{2}(\hat{x}_{2}[k])$:}
$T_{\tau\mu\eta}(\S_2)$  satisfies the following inequality by (ii)  of Theorem \ref{thmpradis}:
\begin{align*}
& \frac{1}{\tau}(V_2(\hat x_2[k+1])-V_2(\hat x_2[k]))\leq\nonumber \\
&\quad \hat u_2[k]^T\hat y_2[k]-\hat \nu_2 \|\hat u_2[k]\|_2^2
-\hat \rho_2 \|\hat y_2[k]\|_2^2 +\hat \a/\tau,
\end{align*}
where $\hat \nu_2,\hat \rho_2,\hat \a$ is obtained from \eqref{indextau2} by {\color{black}substituting} $\nu,\rho,\gamma$ with $\nu_2,\rho_2,\gamma_2$, respectively.

 Now, consider the lower dashed block with input $u_2[k]$ and output $\hat y_2[k]$. Because $\|\hat u_2[k]-u_2[k]\|\leq \mu/2$ under the uniform quantizer, we have $\|\hat u_2[k]-u_2[k]\|_2\leq \sqrt{m}\mu/2$. Following the arguments presented in the proof of Theorem~\ref{thmpradis}, we have
 {\color{black}
\begin{align*}
|\hat u_2[k]^T\hat y_2[k]-u_2[k]^T\hat y_2[k]|
&\leq \frac{\ell_1 m\mu^2}{4}+\frac{\|\hat y_2[k]\|^2_2}{4\ell_1},
\end{align*}
and
\begin{align*}
&|\hat u_2[k]^T\hat u_2[k]-u_2[k]^Tu_2[k]|
\leq \frac{m\mu^2}{4}+
\frac{\|u_2[k]\|^2_2}{\ell_2}+\frac{\ell_2m\mu^2}{4},
\end{align*}}
where $\ell_1,\ell_2$ are arbitrarily positive numbers. Therefore,
\begin{align}
&\quad \frac{1}{\tau}(V_2(\hat x_2[k+1])-V_2(\hat x_2[k]))\nonumber \\
&\leq\hat u_2[k]^T\hat y_2[k]-\hat \nu_2 \|\hat u_2[k]\|_2^2
-\hat \rho_2 \|\hat y_2[k]\|_2^2 +\hat \a/\tau\nonumber\\
%
%
&\leq u_2[k]^T\hat y_2[k]-\bar \nu_2 \|u_2[k]\|_2^2
-\bar \rho_2 \|\hat y_2[k]\|_2^2 +\bar \a_2/\tau\label{inequV1}
\end{align}
where $\bar \nu_2, \bar \rho_2,\bar \a_2$ are given by \eqref{feedbackindi3}.


With these two bounds, we see that (dropping the argument $k$ for notational simplicity)
\begin{align}
&\quad \frac{1}{\tau}(V(x[k+1])-V(x[k]))\nonumber\\
&\leq w_1^Ty_1+w_2^T\hat y_2+2\bar \nu_1 w_1^T \hat y_2-2\bar \nu_2 w_2^Ty_1-\bar\nu_1 w_1^Tw_1\nonumber\\
& \quad - \bar \nu_2w_2^Tw_2-(\bar \nu_2+\bar \rho_1)y_1^Ty_1-(\bar \nu_1+\bar \rho_2)\hat y_2^T\hat y_2\label{eqn:feed0}\\
&\leq w^Ty-\left[w_1^T\; \hat y_2^T\right]\left[\begin{array}{cc}\bar \nu_1& -\bar \nu_1\\-\bar \nu_1& \bar \rho_2+\bar\nu_1
\end{array}\right]\left[\begin{array}{c}w_1\\ \hat y_2
\end{array}\right]\nonumber\\
&\quad-\left[w_2^T\; y_1^T\right]\left[\begin{array}{cc}\bar \nu_2& \bar \nu_2\\\bar \nu_2& \bar \rho_1+\bar\nu_2
\end{array}\right]\left[\begin{array}{c}w_2\\ \hat y_1
\end{array}\right]+\bar \a_2/\tau.\label{eqn:feed1}
\end{align}

The final step follows by noting that if $\nu_c,\rho_c$ are chosen such that \eqref{feedbacknuc} holds (see also \cite{ZhuFeedbackACC14}), then


\begin{align*}
\left[\begin{array}{cc}\bar \nu_1-\nu_c& -\bar \nu_1\\-\bar \nu_1& \bar \rho_2+\bar\nu_1-\rho_c
\end{array}\right]&\geq 0,\\
\left[\begin{array}{cc}\bar \nu_2-\nu_c& \bar \nu_2\\\bar \nu_2& \bar \rho_1+\bar\nu_2-\rho_c
\end{array}\right]&\geq 0.
\end{align*}
This implies that for any $w_1,w_2,y_1,\hat y_2$, we have
\begin{align*}
&\left[w_1^T\; \hat y_2^T\right]\left[\begin{array}{cc}\bar \nu_1-\nu_c& -\bar \nu_1\\-\bar \nu_1& \bar \rho_2+\bar\nu_1-\rho_c
\end{array}\right]\left[\begin{array}{c}w_1\\ \hat y_2
\end{array}\right]\\
&\quad +\left[w_2^T\; y_1^T\right]\left[\begin{array}{cc}\bar \nu_2-\nu_c& \bar \nu_2\\\bar \nu_2& \bar \rho_1+\bar\nu_2-\rho_c
\end{array}\right]\left[\begin{array}{c}w_2\\ y_1
\end{array}\right]\geq 0,
\end{align*}
which is equivalent to
\begin{align}
&\left[w_1^T\; \hat y_2^T\right]\left[\begin{array}{cc}\bar \nu_1& -\bar \nu_1\\-\bar \nu_1& \bar \rho_2+\bar\nu_1
\end{array}\right]\left[\begin{array}{c}w_1\\ \hat y_2
\end{array}\right]\nonumber\\
&\quad +\left[w_2^T\; y_1^T\right]\left[\begin{array}{cc}\bar \nu_2& -\bar \nu_2\\-\bar \nu_2& \bar \rho_1+\bar\nu_2
\end{array}\right]\left[\begin{array}{c}w_2\\ y_1
\end{array}\right]\nonumber\\
&\geq \nu_c w^Tw+\rho_c y^Ty. \label {eqn:feed2}
\end{align}
Plugging \eqref{eqn:feed2} into \eqref{eqn:feed1}, we have the inequality  \eqref{feedinequality}.
\hfill$\Box$

\bibliographystyle{IEEEtran}
\bibliography{./CDC_pass}

\end{document}